\documentclass{amsart}
\usepackage{amssymb,latexsym}
\usepackage{amsmath}
\usepackage{graphicx}
\usepackage{amscd}
\usepackage{color}
\usepackage{enumerate}

\numberwithin{equation}{section}
\theoremstyle{plain}
 \newtheorem{theorem}{Theorem}[section]
 \newtheorem{lemma}[theorem]{Lemma}

 \newtheorem{corollary}[theorem]{Corollary}
\theoremstyle{definition}

 \newtheorem{example}[theorem]{Example}

\theoremstyle{remark}
 
\newcommand\atoms [1]{\textup{At}(#1)}
\newcommand\coatoms [1]{\textup{CA}(#1)}
\newcommand\datoms [1]{\textup{At}'(#1)}
\newcommand\dcoatoms [1]{\textup{CA}'(#1)}
\newcommand\ideal [1]  {\mathord{\downarrow}#1}
\newcommand\filter [1]  {\mathord{\uparrow}#1}
\newcommand \ourn  {\hat L_0}
\newcommand \aourn  {\hat {\alg L}_0}
\newcommand \oure  {\hat L}
\newcommand \aoure  {\hat {\alg L}}
\newcommand \adbgad {\alg G^{\sst{\dblesign}}}
\newcommand \minne [1] {#1^{\mathord{-}01}}
\newcommand \cordiakern {\kern 0.58cm}
\newcommand \bdiakern {\kern 0.1cm}

\newcommand \plusskip{\kern 5 pt}
\newcommand \minusskip{\kern -8 pt}

\newcommand \celfunct {{F_{\kern-1.5pt  \tsssty{com}}}}
\newcommand \ccelfunct [1] {{F_{\kern-1.5pt  \tsssty{com}}^{#1} }}
\newcommand \posetfunct {{F_{\kern-1.5pt \tsssty{pos}}}}

\newcommand \psnv [1] {{\overline #1}\kern 1pt'}

\newcommand \fofunct {{G_{\kern-1.5pt \tsssty{forg}}}}
\newcommand \compfunct {{G_{\kern-1.5pt \tsssty{prod}}}}
\newcommand \foifunct {{G_{\kern-1.5pt \tsssty{forg}}^{-1}}}

\newcommand \xceltransf [1] {{\boldsymbol\pi}_{\kern-0.5pt #1}^{\tsssty{com}}}

\newcommand \xpceltransf [2] {{{\boldsymbol\pi}^{\tsssty{com}}_{\kern-0.5pt #1}(#2)}}

\newcommand \trivcom {\vec v^{\kern 1pt\textup{triv}}}

\newcommand \tsssty[1]{{\scriptscriptstyle{\textup{#1}}}}

\newcommand \pairs [1] {{\textup{Pairs}^{\leq}(#1)}}
\newcommand \ntpairs [1] {{\textup{Pairs}^{<}(#1)}}
\newcommand \covpairs [1] {{\textup{Pairs}^{\prec}(#1)}}


\newcommand\dblesign{\textup{db}}

\renewcommand\rho{\varrho}

\newcommand \sst [1] {\scriptscriptstyle #1}
\newcommand \fprinc {\textup{Princ}}
\newcommand \princ[1] {\fprinc(#1)}
\newcommand \aut[1] {\textup{Aut}(#1)}

\newcommand \cg[2] {\textup{con}(#1,#2)}

\newcommand \cgi[3] {\textup{con}_{#1}(#2,#3)}
\newcommand\alg [1] {{\mathcal #1}}

\newcommand \blokk[2] {#1/#2}

\newcommand\iideal[2]{\mathord\downarrow_{\kern-2pt #1\kern 1pt} #2}
\newcommand\ifilter[2]{\mathord\uparrow_{\kern-2pt #1\kern 1pt} #2}
\newcommand \tuple [1] {\langle #1\rangle}
\newcommand \pair [2] {\tuple{#1,#2}}

\newcommand\red[1]{{\textcolor{red}{#1}}}
\newcommand \tbf [1] {\textbf{#1}} 
\newcommand \set[1] {\{#1\}}

\newcommand \nablaell [1] {\nabla_{\kern -2pt #1}}
\newcommand\czinit [1] {} 
\newcommand\init [1] {} 

%
%
%
%
%
\begin{document}
\title[Representing posets and groups by lattices]
{A simultaneous representation of a group and a bounded poset with lattice automorphisms and  principal congruences}

\author[G.\ Cz\'edli]{G\'abor Cz\'edli}
\email{czedli@math.u-szeged.hu}
\urladdr{http://www.math.u-szeged.hu/\textasciitilde{}czedli/}
\address{University of Szeged, Bolyai Institute, 
Szeged, Aradi v\'ertan\'uk tere 1, HUNGARY 6720}

\dedicatory{The changes made after the first version, \textup{ http://arxiv.org/pdf/1508.04302v1} , are  in \red{red}.}

\thanks{This research was supported by
NFSR of Hungary (OTKA), grant number K 115518}

\subjclass[2000] {06B10
\red{.\hfill August 21, 2015}}

\keywords{Principal lattice congruence,  lattice automorphism, poset, group,  simultaneous representation}

\begin{abstract} 
Given a poset $P$ with at least two elements and a group  $G$, there exists a selfdual lattice of length 16 such 
that the collection of its principal congruences is order isomorphic to $P$ while
its automorphism group to  $G$.
\end{abstract}

\maketitle
\section{Introduction}
\subsection{Aim} For a bounded lattice $L$, $\princ L=\tuple{\princ L;\subseteq}$ denotes the poset of principal congruences of $L$. It is a bounded ordered set. 
Conversely, \init{G.~}Gr\"atzer~\cite{ggprincl} proved that every bounded ordered set is isomorphic to $\princ L$ for an appropriate lattice $L$ of length 5; see \cite{czgprincc} for a generalization to the countably infinite case.
Let $\aut L=\tuple{\aut L;\circ}$ stand for the group of automorphisms of $L$. By \init{G.~}Birkhoff~\cite{birkhoff},  every group is isomorphic to $\aut L$ for an appropriate lattice $L$. Our goal is to prove the following theorem.

\begin{theorem}\label{thmmain} If $P$ is a bounded ordered set with at least two elements and  $G$ is  an arbitrary group, then there exists a selfdual lattice $L$ of length sixteen such that $\princ L$ and $\aut L$ are isomorphic to $P$ and $G$, respectively. 
\end{theorem}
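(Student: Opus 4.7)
The plan is to combine a Gr\"atzer-style construction, which realizes the bounded poset $P$ as the poset of principal congruences of a lattice, with a Birkhoff/Frucht-style construction, which realizes the group $G$ as the automorphism group of a (rigid-building-block) lattice, and then to double the result to enforce selfduality. The two sub-constructions must be made \emph{orthogonal}, so that the congruence data is not disturbed by the added symmetries and the automorphism data is not disturbed by the asymmetric pieces needed to prescribe congruences.

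For the congruence side, I would start from a lattice $L_P$ whose poset of principal congruences satisfies $\princ{L_P}\cong P$; Gr\"atzer's original construction gives such an $L_P$ of length~$5$. Each ``edge gadget'' generating a designated principal congruence $\kappa_p$ ($p\in P$) should carry a distinguishing local ``label'' (for instance a copy of $\Mnh$ or of $M_{n_p}$ with a size depending on $p$), so that no lattice automorphism can mix two distinct edge gadgets. For the automorphism side, I would take a rigid bounded lattice $R$, form $|G|$ indexed copies $R_g$ ($g\in G$), and attach them to $L_P$ along a common ``spine'' consisting of elements that any automorphism is forced to fix for congruence-theoretic reasons. The right-regular action of $G$ on the index set then extends to lattice automorphisms, and since each $R_g$ is rigid and the edge-gadget labels are $G$-invariant, any lattice automorphism of the combined structure is forced to come from $G$. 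The copies of $R$ should be chosen rigid and unlabelled enough that they contribute no new principal congruences.

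For selfduality, I would apply a doubling construction (as the $\dblesign$ superscript in the paper's macros suggests): glue the structure built so far to an isomorphic copy of its order-dual along a suitable equator layer, building in an order-reversing involution that swaps the two halves, and let $G$ act diagonally so that the involution commutes with the $G$-action. This preserves both $\princ L\cong P$ and $\aut L\cong G$ and roughly doubles the length. A careful count -- two copies of a length-$\leq 7$ adjusted Gr\"atzer-type skeleton, plus a few layers for the gluing equator and for anchoring the automorphism gadgets -- should land at length~$16$. The main obstacle I expect is precisely the orthogonality bookkeeping: verifying that the automorphism gadgets produce no unintended principal congruences, that the edge-gadget labels admit no accidental symmetries across different gadgets, and that both features survive the selfdualizing involution, all while keeping the total length at exactly~$16$.
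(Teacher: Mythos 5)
Your overall architecture --- building the congruence part and the automorphism part separately, labelling the congruence gadgets with pairwise non-isomorphic rigid pieces so that they cannot be permuted, and keeping the two parts orthogonal --- is the right one and matches the paper's strategy in spirit. But the automorphism half of your plan has a genuine gap. If you attach $|G|$ pairwise isomorphic rigid lattices $R_g$ to a fixed spine, all in the same way and with no further structure relating them, then \emph{every} permutation of the index set extends to a lattice automorphism: the automorphism group you obtain contains the full symmetric group on $|G|$ letters, not $G$. The right-regular representation embeds $G$ into that symmetric group, but nothing in your construction cuts the group back down to its image. To do so you must encode relations \emph{between} the copies --- in effect a Cayley graph of $G$ with distinguishable generator-labelled edges, or, more cleanly, an arbitrary graph $\tuple{V;E}$ with $\aut{\tuple{V;E}}\cong G$ supplied by Sabidussi's theorem (Frucht's in the finite case). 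This is exactly what the paper does: it takes one prime interval $[a_v,b_v]$ per vertex $v\in V$, inflates all of them by the \emph{same} rigid simple lattice $S(0)$ so that they are mutually interchangeable, and places a congruence gadget between $[a_u,b_u]$ and $[a_v,b_v]$ precisely when $\set{u,v}\in E$; lattice automorphisms are then forced to act as graph automorphisms, and conversely every graph automorphism lifts.

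Two further points. First, the paper does not obtain selfduality by doubling; instead every building block is arranged to be selfdual from the start (the base quasi-colored lattice from the author's earlier work is selfdual, and the rigid simple ``labels'' $S(\alpha)$, of length $12$, are themselves built by gluing a Frucht-type lattice to its own dual around a selfdual core, so that inflating prime intervals of a length-$4$ skeleton yields length $16$). Your equator-gluing would have to be checked against both invariants: a Hall--Dilworth gluing of $L$ with its dual changes $\princ L$, and although the order-reversing involution is a dual automorphism and so does not enlarge $\aut L$, the congruence bookkeeping is nontrivial. Second, you say the copies of $R$ ``should contribute no new principal congruences'' but give no mechanism; the paper's mechanism is to adjoin the vertices of the Sabidussi graph to the quasiordered index set as extra top elements, so that each interval $[a_v,b_v]$ is forced by gadgets to generate $\nablaell L$, which is already a principal congruence. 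Without some such device the attached rigid pieces would typically add new elements to $\princ L$.
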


\subsection{Sketch}\label{subsectkdldl}
For those familiar with \cite{czginjlatcat}, the next  paragraph and  Examples~\ref{exegy}--\ref{examplehrm}, see later, are sufficient to understand our construction and the idea of the proof. 

Each of \init{G.~}Gr\"atzer~\cite{ggprincl} and  \cite{gghomoprinc} and 
\czinit{G.~}Cz\'edli~\cite{czgprincc}, \cite{czgsingleinjectiveprinc}, and \cite{czginjlatcat}
associates a lattice $L$ with $P$ such that $\princ L\cong P$. In  these papers, we start with a set $\set{[a_p,b_p]: 0\neq p\in P}$ of ``key'' prime intervals, and add certain additional elements, which are organized into ``gadgets'', to obtain an appropriate $L$. Here, to get rid of
the automorphisms inherited from $P$, we replace
the key prime intervals with distinct simple bounded lattices that have no nontrivial automorphism. These lattices are constructed in Section~\ref{sectrigid}. Next, the result from \init{G.~}Sabidussi~\cite{sabidussi} allows us to represent $G$ as the automorphism group of a graph $\tuple{V;E}$.  For each $v\in V$, we add a prime interval $[a_v,b_v]$ to our lattice together with appropriate gadgets to force that these new prime intervals generate the largest congruence. (Later, to make these intervals recognizable, we enlarge them to disjoint copies of an approrpiate simple lattice.) Whenever $\pair u v\in E$, we add a gadget between $[a_u,b_u]$ and $[a_v,b_v]$. The  new gadgets encode the graph into the lattice without changing $\princ L$. These details are elaborated in Section~\ref{sectconstcompl}, where  both the quasi-coloring technique developed in 
\cite{czgrepres}--\cite{czginjlatcat} and the ideas of
 \cite{czgprincc}--\cite{czginjlatcat}  and \init{G.~}Gr\"atzer~\cite{ggprincl}
are intensively used; however, it suffices  if the reader only keeps  \cite{czginjlatcat} nearby.

\section{Graphs and rigid simple lattices}\label{sectrigid}
By a graph we mean a pair $\tuple{V;E}$ where $V$ is a nonempty set, the \emph{vertex set}, and $E$ is a subset of the set of two-element subsets of $E$, the \emph{edge set} of the graph.
The following statement is due to \init{G.~}Sabidussi~\cite{sabidussi}; see also \init{R.~}Frucht~\cite{fruchtgnul} and \cite{fruchtgegy} for the finite case.

\begin{lemma}[\cite{sabidussi}]\label{lemmasabidussi}
 For every group $G$, there exists a graph $\tuple{V;E}$ such that $G$ is isomorphic to $\aut{\tuple{V;E}}$.
\end{lemma}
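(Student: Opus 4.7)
The plan is to use the classical Cayley graph construction with asymmetric edge gadgets. Start from the left regular representation $\lambda : G \to \textup{Sym}(G)$, $\lambda(g)(x) = gx$, which is faithful, so $G$ embeds into the symmetric group on its own underlying set. Choose a generating set $S \subseteq G \setminus \{e\}$; the safe default is $S = G \setminus \{e\}$, so the question of whether $G$ is finitely generated never arises. Form the \emph{colored Cayley digraph} $\Gamma_c$ with vertex set $G$ and, for each $g \in G$ and $s \in S$, a directed edge from $g$ to $gs$ labeled by the color $s$. A standard verification shows that the color- and direction-preserving automorphisms of $\Gamma_c$ are exactly the left translations $\lambda(g)$, so $\textup{Aut}(\Gamma_c) \cong G$.

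The next step is to replace colors and directions by plain graph structure. For each $s \in S$ I would choose a connected graph gadget $H_s$ with two distinguished vertices (a ``head'' and a ``tail'') so that (i) no two $H_s$, $H_{s'}$ are isomorphic as two-pointed graphs for $s \neq s'$, (ii) each $H_s$ has no nontrivial two-pointed automorphism (in particular the head and tail are not exchangeable), and (iii) the degree of the attachment points and the internal degrees are arranged so the gadget vertices cannot be confused with vertices of $G$. A concrete realization: attach to each directed colored edge a path whose interior vertices are decorated with distinct small pendant trees, where the decoration encodes $s$ and whose asymmetric shape encodes the direction. For an uncountable $G$, one simply indexes the gadget types by a set of the appropriate cardinality, using for instance two-pointed rigid trees of distinct isomorphism types, of which there are arbitrarily many. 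Gluing these gadgets into $\Gamma_c$ in place of the colored directed edges produces an uncolored undirected graph $\langle V;E \rangle$.

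Finally, one must verify $\textup{Aut}(\langle V;E \rangle) \cong G$. Every $\lambda(g)$ extends to an automorphism of $\langle V;E \rangle$ by acting trivially on each gadget copy (shifting the copy together with its endpoints), giving a monomorphism $G \hookrightarrow \textup{Aut}(\langle V;E \rangle)$. Conversely, any graph automorphism $\varphi$ must preserve the set of ``high-connectivity'' vertices, which are precisely the elements of $G$, because the gadget interiors have controlled small degrees, hence $\varphi$ restricts to a permutation of $G$; moreover $\varphi$ maps each $H_s$-gadget to an isomorphic two-pointed gadget, and by condition (i) this forces color preservation, and by (ii) direction preservation. Thus the restriction of $\varphi$ to $G$ is a color- and direction-preserving automorphism of $\Gamma_c$, so it equals some $\lambda(g)$. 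The main obstacle, and the only real content of the argument, is the careful design of the gadgets so that conditions (i)--(iii) hold uniformly for a family indexed by a possibly very large generating set $S$; everything else is a routine but bookkeeping-heavy verification.
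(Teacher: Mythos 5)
The paper does not prove this lemma at all: it is quoted as a known result of Sabidussi (with Frucht covering the finite case), so there is no internal proof to compare against. Your sketch is the classical argument behind that citation --- Cayley colour graph plus rigid asymmetric edge gadgets --- and its outline is sound; the only genuinely non-routine ingredient you assert without justification is the existence of arbitrarily large families of pairwise non-isomorphic rigid two-pointed graphs (needed when $|G|$ is uncountable), together with the degree bookkeeping that keeps gadget interiors distinguishable from group vertices for very small groups. Both points are standard but are exactly where the published proofs spend their effort, so a complete write-up would have to supply them.
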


Next, we  borrow some concepts from \cite{czginjlatcat}. A \emph{quasiorder} is a  reflexive transitive relation. 
For a lattice or ordered set  $L=\tuple{L;\leq}$ and $x,y\in L$, $\pair x y$ is called an \emph{ordered pair} of $L$ if $x\leq y$. If $x=y$, then  $\pair x y$  is a \emph{trivial ordered pair}. The set of ordered pairs and that of nontrivial ordered pairs of $L$ are denoted by $\pairs L$ and $\ntpairs L$, respectively. If $X\subseteq L$, then $\pairs X$ will stand for $X^2\cap\pairs L$.
We also need the notation $\covpairs L:=\set{\pair x y\in \pairs X: x\prec y}$ for the set of \emph{covering pairs}. 
By a \emph{quasi-colored  lattice} we mean a structure 
\begin{equation*}
\alg L=\tuple{L, \leq;\gamma;H,\nu}
\end{equation*}
where $\tuple{L;\leq}$ is a lattice, $\tuple{H;\nu}$ is a quasiordered set, $\gamma\colon \pairs L\to H$ is a surjective map called \emph{coloring}, and for all $\pair{u_1}{v_1},\pair{u_2}{v_2}\in \pairs L$, 
\begin{enumerate}[\quad \normalfont({C}1)]
\item\label{labqa} if $\bigl\langle{ \gamma(\pair{u_1}{v_1})}\,,\, {\gamma(\pair{u_2}{v_2})}\bigr\rangle\in \nu$, then $\cg{u_1}{v_1}\leq \cg{u_2}{v_2}$ and 
\item\label{labqb} if $\cg{u_1}{v_1}\leq \cg{u_2}{v_2}$, then  $\bigl\langle{ \gamma(\pair{u_1}{v_1})}\,,\, {\gamma(\pair{u_2}{v_2})}\bigr\rangle\in \nu$.
\end{enumerate}
This concept is taken from  \cite{czgprincc} or \cite{czginjlatcat}; for some earlier variants of the concept, 
see  
\init{G.\ }Gr\"atzer, \init{H.\ }Lakser, and \init{E.T.\ }Schmidt~\cite{grlaksersch}, \init{G.\ }Gr\"atzer~\cite[page 39]{grbypict}, and  \cite{czgrepres}. 
For a quasiordered set $\tuple{H,\nu}$, we let $\Theta_\nu=\nu\cap\nu^{-1}$. Then $\Theta_\nu$  is an equivalence relation, and the definition 
\begin{equation}
\pair{\blokk x{\Theta_\nu}} {\blokk y{\Theta_\nu}} \in \nu/\Theta_\nu  \overset{\textup{def}}\iff \pair x y\in \nu
\label{eqrperThnu}
\end{equation}
turns the quotient set $H/\Theta_\nu$ into an ordered set $\tuple{H;\nu}/\Theta_\nu:=\tuple{H/\Theta_\nu;\nu/\Theta_\nu}$. The importance of quasi-colored lattices is explained by the following lemma, which is a straightforward consequence of (C\ref{labqa}) and (C\ref{labqb}); see \cite[Lemma 2.1]{czgprincc} or \cite[Lemma 4.7]{czginjlatcat}.

\begin{lemma}\label{lemmaqchaszna}
If $\alg L=\tuple{L, \leq;\gamma;H,\nu}$ is a quasi-colored lattice, then $\princ L$ is isomorphic to $\tuple{H;\nu}/\Theta_\nu$.
\end{lemma}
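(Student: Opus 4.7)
The plan is to build an explicit order isomorphism
$\varphi\colon \tuple{H;\nu}/\Theta_\nu \to \princ L$ and then verify the four standard properties: well-definedness, injectivity, surjectivity, and the order isomorphism property. The only content comes from (C\ref{labqa}) and (C\ref{labqb}); everything else is bookkeeping.

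First I would define $\varphi$ by $\varphi(\blokk{x}{\Theta_\nu}) := \cg{u}{v}$, where $\pair{u}{v}\in \pairs L$ is any ordered pair with $\gamma(\pair u v)=x$; such a pair exists by the surjectivity of $\gamma$. To see that this assignment is independent of all choices, suppose $\gamma(\pair{u_1}{v_1})=x_1$, $\gamma(\pair{u_2}{v_2})=x_2$, and $\blokk{x_1}{\Theta_\nu}=\blokk{x_2}{\Theta_\nu}$. Then $\pair{x_1}{x_2}\in \nu$ and $\pair{x_2}{x_1}\in \nu$, so two applications of (C\ref{labqa}) give $\cg{u_1}{v_1}\leq \cg{u_2}{v_2}$ and $\cg{u_2}{v_2}\leq \cg{u_1}{v_1}$, hence equality. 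Taking $x_1=x_2$ covers the narrower choice of the representative $\pair u v$, so $\varphi$ is well defined.

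Next I would verify surjectivity: every principal congruence of $L$ has the shape $\cg a b = \cg{a\wedge b}{a\vee b}$ for some ordered pair $\pair{a\wedge b}{a\vee b}\in\pairs L$, and applying $\gamma$ to that pair yields an element of $H$ whose $\Theta_\nu$-block maps to the given congruence. For injectivity, assume $\varphi(\blokk{x_1}{\Theta_\nu})=\varphi(\blokk{x_2}{\Theta_\nu})$, i.e.\ $\cg{u_1}{v_1}=\cg{u_2}{v_2}$ for chosen representatives. Then (C\ref{labqb}) applied in both directions gives $\pair{x_1}{x_2}\in\nu$ and $\pair{x_2}{x_1}\in\nu$, so $\pair{x_1}{x_2}\in\Theta_\nu$ and the two blocks coincide.

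Finally, the order isomorphism property is immediate from the combination of (C\ref{labqa}) and (C\ref{labqb}) together with the definition \eqref{eqrperThnu} of the quotient order:
\begin{equation*}
\blokk{x_1}{\Theta_\nu}\leq \blokk{x_2}{\Theta_\nu}
\iff \pair{x_1}{x_2}\in\nu
\iff \cg{u_1}{v_1}\leq \cg{u_2}{v_2}
\iff \varphi(\blokk{x_1}{\Theta_\nu})\leq \varphi(\blokk{x_2}{\Theta_\nu}).
\end{equation*}
There is no real obstacle here; the mildly delicate point is only that the two-sided use of (C\ref{labqa}) simultaneously justifies well-definedness and the ``$\Leftarrow$'' direction of order preservation, while (C\ref{labqb}) supplies injectivity and the ``$\Rightarrow$'' direction. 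The statement is essentially a tautological reformulation of the axioms (C\ref{labqa}) and (C\ref{labqb}) once the quotient $H/\Theta_\nu$ has been introduced via \eqref{eqrperThnu}.
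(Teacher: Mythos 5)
Your proof is correct and is exactly the ``straightforward consequence of (C1) and (C2)'' that the paper invokes (deferring the written-out argument to the cited Lemma 2.1 of \cite{czgprincc} and Lemma 4.7 of \cite{czginjlatcat}): the surjectivity of $\gamma$ gives the map, (C1) gives well-definedness and one direction of order preservation, (C2) gives injectivity and the other direction, and $\cg a b=\cg{a\wedge b}{a\vee b}$ gives surjectivity. No discrepancy with the paper's intended argument.
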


Given a quasi-colored lattice $\alg L=\tuple{L, \leq;\gamma;H,\nu}$, a pair $\pair a b\in \covpairs L$, and a simple bounded lattice $K$, we define a new quasi-colored lattice 
\[\alg L(a,b,K)=\tuple{L(a,b,K), \leq_{a,b,K};\gamma_{a,b,K};H,\nu}
\]
as follows. To obtain $L(a,b,K)$, we insert $K$ into the prime interval $[a,b]$ such that we identify $0_K$ and $1_K$ with $a$ and $b$, respectively. This makes the meaning of $\leq_{a,b,K}$ clear. The ordered set $L(a,b,K)$ we obtain in this way is obviously a lattice. Since $\pairs L\subseteq \pairs {L(a,b,K)}$, we  can define $\gamma_{a,b,K}$ as the extension of $\gamma$ such that, for $\pair x y\in \pairs {L(a,b,K)} \setminus \pairs L$,  
\begin{equation}
\gamma_{a,b,K}(\pair x y)=
\begin{cases}
0,&\text{if }x=y,\cr
\gamma(\pair a b),&\text{if } x,y\in K\text{ and }x\neq y,\cr
\gamma(\pair a y),&\text{if } x\in K\text{ and }y\notin K,\cr
\gamma(\pair x b),&\text{if } x\notin K\text{ and }y\in K\text.
\end{cases}
\label{eqrgmmaabKdf}
\end{equation}

The straighforward \red{(but not so short)} proof of the following lemma is left to the reader
\red{(details will be given later).}

\begin{lemma}\label{lemmalabk}
 If $\alg L=\tuple{L, \leq;\gamma;H,\nu}$ is a quasi-colored lattice, then so is 
the above-defined $\alg L(a,b,K)$, \red{ provided that for all $x<a$ and $y>b$, 
$\cgi L x a = \cgi L y b=\nablaell L$.}
\end{lemma}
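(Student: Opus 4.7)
The plan is to establish an order-preserving bijection between $\Con{L(a,b,K)}$ and $\Con L$, compute the principal congruences of $L(a,b,K)$ in terms of those of $L$, observe that the outcome matches \eqref{eqrgmmaabKdf}, and then transport (C\ref{labqa}) and (C\ref{labqb}) from $\alg L$. First I would verify that $L$ is a sublattice of $L(a,b,K)$; because $a \prec b$ in $L$ leaves no room in $[a,b]$, inserting $K$ does not disturb joins or meets among $L$-elements. Restriction then gives an order-preserving map $\Con{L(a,b,K)} \to \Con L$. For the inverse, a congruence $\Theta$ of $L$ is extended to $L(a,b,K)$ by adding $K \times K$ when $\pair a b \in \Theta$ (which includes $\Theta = \nablaell L$) and by adding only the identity on $K$ when $\pair a b \notin \Theta$. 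Simplicity of $K$ forces uniqueness: $\Theta'\cap K^2$ is a congruence of $K$ and hence is either total or trivial, according to whether $\pair a b \in \Theta$ or not.

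The main obstacle is to verify that the identity extension (case $\pair a b \notin \Theta$) is compatible with $\vee$ and $\wedge$ in $L(a,b,K)$, and this is exactly where the hypothesis enters. I would first show that for a non-full $\Theta$ with $\pair a b \notin \Theta$, every $\Theta$-block lies entirely in $\ideal a$, entirely in $\filter b$, or entirely in $L \setminus (\ideal a \cup \filter b)$. Indeed, if $\pair u v \in \Theta$ had $u \leq a$ and $v \not\leq a$, then joining with $a$ gives $\pair a{v \vee a} \in \Theta$ with $v \vee a > a$; meeting with $b$ and using $a \prec b$ together with $\pair a b \notin \Theta$ forces $v \vee a$ to be incomparable with $b$; joining with $b$ then yields $\pair b{v \vee b} \in \Theta$ with $v \vee b > b$, and by hypothesis $\cgi L b{v \vee b} = \nablaell L$, which contradicts the non-fullness of $\Theta$. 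The dual argument, using $\cgi L x a = \nablaell L$ for $x < a$, handles pairs crossing $b$. Granted this block structure, compatibility with $\vee$ and $\wedge$ in $L(a,b,K)$ is a short case check: for $w \in K \setminus \{a,b\}$ and $\pair u v \in \Theta$, both $u \vee w$ and $v \vee w$ simplify either to the common value $w$ (when $u, v \in \ideal a$) or to the $L$-values $u \vee b$ and $v \vee b$, which are $\Theta$-related; meets are dual.

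Granting the bijection, each $\pair u v \in \pairs{L(a,b,K)}$ has a ``shadow'' $\pair{u'}{v'} \in \pairs L$ whose principal $L$-congruence corresponds to $\cgi{L(a,b,K)}{u}{v}$: the shadow is $\pair u v$ itself when $u,v \in L$; it is $\pair a b$ when $u,v \in K$ and $u \neq v$ (by simplicity of $K$); and in the mixed cases, $\cgi{L(a,b,K)}{u}{v}$ decomposes along the chain $u \leq b \leq v$ (respectively $u \leq a \leq v$), yielding shadow $\pair a v$ (respectively $\pair u b$). This matches the case distinction in \eqref{eqrgmmaabKdf} exactly, so $\gamma_{a,b,K}(\pair u v) = \gamma(\pair{u'}{v'})$. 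Axioms (C\ref{labqa}) and (C\ref{labqb}) for $\alg L(a,b,K)$ then follow by translating both pairs to their shadows and applying the corresponding axioms for $\alg L$.
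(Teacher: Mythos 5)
The paper never actually gives a proof of Lemma~\ref{lemmalabk} --- it is explicitly ``left to the reader,'' with details promised for a later version --- so there is no argument of the author's to compare yours against line by line; your job is to supply the missing proof, and you do so correctly. The decisive point is exactly the one you isolate: a congruence $\Theta$ of $L$ with $\pair a b\notin\Theta$ extends to $L(a,b,K)$ by the identity on $K\setminus\set{a,b}$ only if no $\Theta$-block meets both $\ideal a$ and its complement (and dually for $\filter b$), and your derivation of this trichotomy from the red hypothesis --- join with $a$, meet with $b$ using $a\prec b$ and $\pair a b\notin\Theta$ to force incomparability, join with $b$, then invoke $\cgi L b{v\vee b}=\nablaell L$ --- is sound, with the dual computation handling $\filter b$. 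Given the resulting order isomorphism $\Con L\cong\Con{L(a,b,K)}$, your ``shadow'' computation of the principal congruences reproduces \eqref{eqrgmmaabKdf} case by case, and (C\ref{labqa})--(C\ref{labqb}) transfer as you describe. Two steps are stated more tersely than they deserve, though neither is a gap in the method: the extension in the case $\pair a b\in\Theta$ (one merges $K$ into the $\Theta$-block of $a$ and checks compatibility, which is routine and does not use the hypothesis), and the uniqueness claim, where simplicity of $K$ pins down $\Theta'\cap K^2$ but one still needs a short extra argument that, when $\Theta'\cap K^2$ is trivial, no $w\in K\setminus\set{a,b}$ is $\Theta'$-related to any element of $L$ (if $\pair w u\in\Theta'$, convexity of blocks gives $\pair w{w\vee u}\in\Theta'$; meeting with $b$ or joining with $a$ then lands a nontrivial pair in $\Theta'\cap K^2$, a contradiction). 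With those two remarks added, your write-up would serve as the proof the paper defers.
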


\begin{figure}[htc]
\centerline
{\includegraphics[scale=0.9]{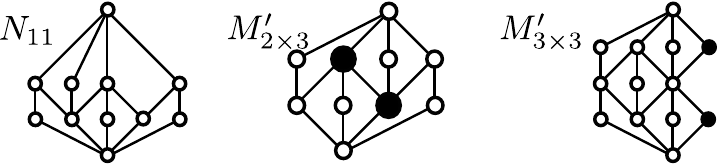}}
\caption{$M_{2\times 3}'$ and $M_{3\times 3}'$\label{figmhhvsz}}
\end{figure}

\begin{corollary}\label{corLKabsimple}
If $\pair a b\in \covpairs L$ and $K$ is a simple lattice, then $L(a,b,K)$ is also a simple lattice. 
\end{corollary}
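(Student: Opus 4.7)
The plan is to reduce to Lemma~\ref{lemmalabk} and Lemma~\ref{lemmaqchaszna} by equipping $L$ with the two-color quasi-coloring that encodes its simplicity. (The statement implicitly assumes $L$ itself is simple: otherwise a non-trivial proper congruence of $L$ would yield one of $L(a,b,K)$, so the conclusion could not hold in general.) First I would take $H=\{0,1\}$ with the partial order $\nu=\{\pair 0 0,\pair 0 1,\pair 1 1\}$, and set $\gamma(\pair x y)=0$ if $x=y$ and $\gamma(\pair x y)=1$ otherwise. Simplicity of $L$ makes every non-trivial principal congruence of $L$ equal to $\nablaell L$, so conditions (C\ref{labqa}) and (C\ref{labqb}) become immediate case checks; thus $\alg L:=\tuple{L,\leq;\gamma;H,\nu}$ is a quasi-colored lattice.

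Simplicity of $L$ also delivers the extra hypothesis of Lemma~\ref{lemmalabk}: for every $x<a$ in $L$ the congruence $\cgi L x a$ is non-trivial and therefore equals $\nablaell L$, and symmetrically for $y>b$. Hence Lemma~\ref{lemmalabk} produces a quasi-colored lattice $\alg L(a,b,K)=\tuple{L(a,b,K),\leq_{a,b,K};\gamma_{a,b,K};H,\nu}$ whose color poset is still the two-element chain $\tuple{H;\nu}$. Invoking Lemma~\ref{lemmaqchaszna} now yields
\[ \princ{L(a,b,K)} \;\cong\; \tuple{H;\nu}/\Theta_\nu \;=\; \{0,1\}, \]
so the only principal congruences of $L(a,b,K)$ are the trivial one and the universal one.

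To conclude, I would invoke the standard fact that every lattice congruence is the join of the principal congruences it contains. Since only the trivial and the universal congruence are available as principals on $L(a,b,K)$, the entire congruence lattice of $L(a,b,K)$ is the two-element chain, i.e.\ $L(a,b,K)$ is simple. I do not anticipate any serious obstacle: the genuine content of the argument is the choice of the ``tautological'' two-element quasi-coloring that turns Lemma~\ref{lemmalabk} into a black box, and that choice is essentially forced by the simplicity of $L$; everything else is bookkeeping.
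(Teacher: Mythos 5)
Your proof is correct, and since the paper states this corollary without any argument at all (it is placed directly after Lemma~\ref{lemmalabk}, whose own proof is also deferred), your reduction via the ``tautological'' two-element quasi-coloring is exactly the natural way to read it as a corollary of that lemma. Two points you handle well deserve emphasis. First, you are right that simplicity of $L$ is an unstated but necessary hypothesis: without it, e.g.\ for $L$ a four-element chain, $L(a,b,K)$ has obvious proper nontrivial congruences, and in the paper the corollary is only ever invoked (in the proof of Lemma~\ref{lemmaSalpha}) with $L$ already known to be simple. Second, simplicity of $L$ is also what discharges the red proviso of Lemma~\ref{lemmalabk} ($\cgi L x a=\cgi L y b=\nablaell L$ for $x<a$, $y>b$), so the lemma genuinely applies. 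The verification of (C\ref{labqa}) and (C\ref{labqb}) for the two-element coloring, the passage through Lemma~\ref{lemmaqchaszna} to conclude that $\princ{L(a,b,K)}$ is a two-element chain, and the final step that every congruence is a join of principal ones are all sound. The only caveat is that your argument inherits whatever burden lies in the omitted proof of Lemma~\ref{lemmalabk}; a self-contained alternative would be a direct congruence-spreading argument (a nontrivial congruence collapsing something in $K$ collapses $[a,b]$ by simplicity of $K$ and hence spreads through the simple $L$, and conversely), but treating Lemma~\ref{lemmalabk} as a black box, as you do, is legitimate and matches the paper's intent.
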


\begin{figure}[htc]
\centerline
{\includegraphics[scale=0.9]{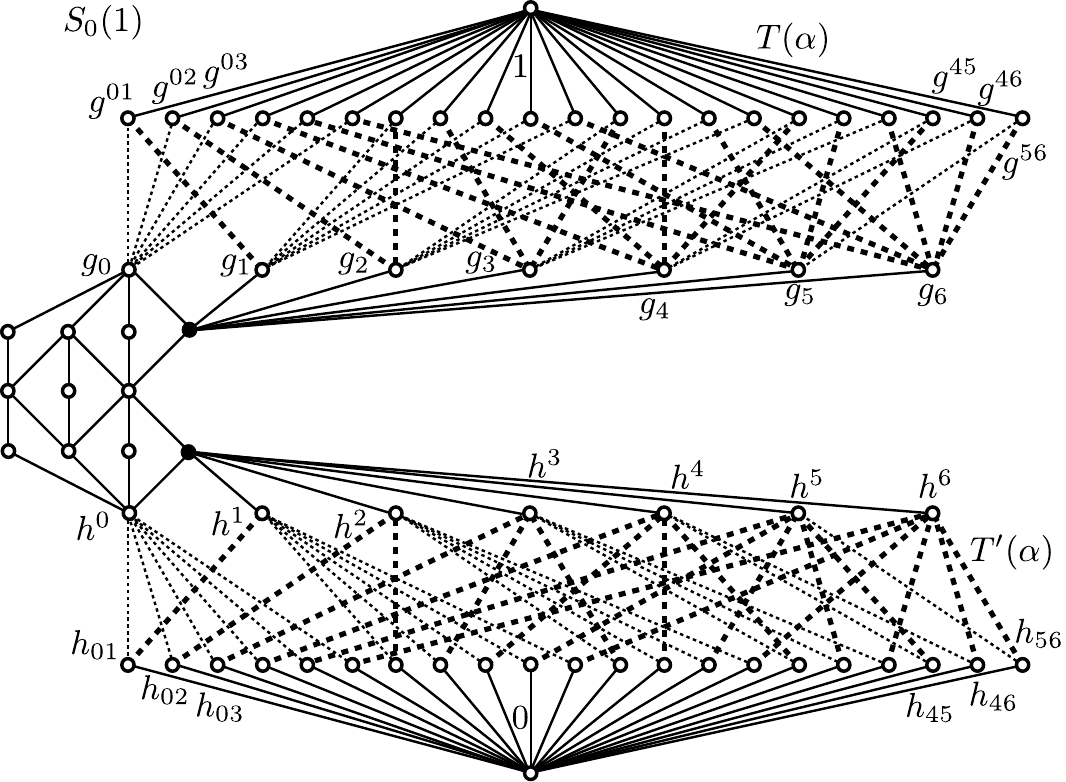}}
\caption{$S_0(1)$\label{fig-fgdZlT}}
\end{figure}

A lattice or a graph is \emph{automorphism-rigid} if its automorphism group is one-element. We are going to define a class $\{S(\alpha): \alpha$ is an ordinal number$\}$ of pairwise non-isomorphic automorphism-rigid simple lattices of length 12.
Let $\alpha$ be an ordinal number, and let 
$\atoms\alpha=\set{g_\iota: \iota<6+\alpha}$ and 
$\coatoms\alpha=\set{g^{\iota\mu}: \iota<\mu<6+\alpha}$. We agree that these two sets are disjoint from each other and from $\set{0,1}$.
On the set $T(\alpha):=\atoms\alpha\cup\coatoms\alpha\cup\set{0,1}$, we define an ordering as follows: 0 and 1 are the bottom and top elements, $\atoms\alpha$ and $\coatoms\alpha$ are the set of atoms and that of coatoms, respectively, and, for $\iota<6+\alpha$ and $\kappa<\mu<6+\alpha$, 
$g_\iota < g^{\kappa\mu}\overset{\text{def}}\iff \iota\in\set{\kappa,\mu}
$.
Similarly, let $\datoms\alpha=\set{h_{\iota\mu}: \iota<\mu<6+\alpha}$, $\dcoatoms\alpha=\set{h^\iota: \iota<6+\alpha}$, $T'(\alpha):=\datoms\alpha\cup\dcoatoms\alpha\cup\set{0,1}$ such that $\datoms\alpha$ and $\dcoatoms\alpha$ are its sets of atoms and coatoms, respectively, and 
$h_{\kappa\mu}< h^\iota  \overset{\text{def}}\iff \iota\in\set{\kappa,\mu}
$. That is, $T'(\alpha)$ is the dual of $T(\alpha)$.

Next, consider the lattice $M_{3\times 3}'$ given by Figure~\ref{figmhhvsz}. The black-filled atom and the black-filled coatom determines a principal ideal $I$ and a principal filter $F$, respectively. 
Form the Hall--Dilworth gluing of $T(\alpha)$ and $M_{3\times 3}'$  along $F$ and the principal ideal $\ideal{g_0}$. 
In the next step, form the Hall--Dilworth gluing of the lattice we have just obtained and $T'(\alpha)$ along $I$ and the principal filter $\filter{h^0}$. The lattice we obtain in this way is $S_0(\alpha)$. For $S_0(1)$, see Figure~\ref{fig-fgdZlT}.  

For $\iota<\mu<6+\alpha$, an edge of one of the forms 
$\pair{g_\iota} {g^{\iota\mu}}$, $\pair{g_\mu} {g^{\iota\mu}}$,  $\pair{h_{\iota\mu}}{h^{\iota}}$, and $\pair{h_{\iota\mu}}{h^{\mu}}$ is called an
\emph{upper left edge}, an
\emph{upper right edge}, a
\emph{lower left edge}, and a \emph{lower right edge}, respectively. (This terminology is motivated by the connection between $S_0(\alpha)$ and Frucht's graphs; see in the proof later.) The (upper and lower) left  edges are indicated by densely dotted lines in Figure~\ref{fig-fgdZlT}. The (upper and lower) right  edges are thick dotted lines, and there are also ``ordinary'' edges, the solid lines. 
We replace each upper left  edge and each lower right  edge of $S_0(\alpha)$ with a copy of the lattice $N_{11}$ from Figure~\ref{figmhhvsz}, using disjoint copies for distinct edges. Similarly, we replace each 
each lower left  edge and 
upper right  edge of $S_0(\alpha)$ with the dual $N^{(d)}_{11}$ of $N_{11}$, using disjoint copies for distinct edges again. The lattice we obtain is denoted by $S(\alpha)$.

\begin{lemma} \label{lemmaSalpha}
For every ordinal $\alpha$, $S(\alpha)$ is an
automorphism-rigid simple selfdual lattice of length $12$. Moreover, $S(\alpha)\cong S(\beta)$ iff $\alpha=\beta$.
\end{lemma}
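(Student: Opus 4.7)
The plan is to verify the listed properties of $S(\alpha)$ in turn (length, simplicity, selfduality, automorphism-rigidity) and then to deduce the non-isomorphism statement. Each building block $T(\alpha)$, $M_{3\times 3}'$, $T'(\alpha)$ is a bounded simple lattice of explicitly computable length, and the two Hall--Dilworth gluings are along two-element chains, so the length of $S_0(\alpha)$ is obtained by a direct height count. Replacing those edges of $S_0(\alpha)$ that sit on a maximal chain by copies of $N_{11}$ (respectively of its dual), each of length $3$, adds two units of height per such replacement; the bookkeeping gives length $12$ for $S(\alpha)$. Simplicity is preserved step by step: a Hall--Dilworth gluing of simple lattices along a common covering pair is simple, since any nontrivial congruence collapses that shared pair and then propagates through both summands by their simplicity, and each subsequent covering-pair replacement preserves simplicity by Corollary~\ref{corLKabsimple}. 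For selfduality, the natural anti-isomorphism $T(\alpha)\to T'(\alpha)$ together with the selfduality of $M_{3\times 3}'$ and the symmetric choice $\ideal{g_0}\leftrightarrow \filter{h^0}$ supplies a duality of $S_0(\alpha)$; under it the four edge-types interchange as UL~$\leftrightarrow$~LL and UR~$\leftrightarrow$~LR, and because $N_{11}$ dualizes to $N_{11}^{(d)}$, the replacement scheme (UL,~LR~$\mapsto N_{11}$; LL,~UR~$\mapsto N_{11}^{(d)}$) is respected and the duality extends to $S(\alpha)$.

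For rigidity, take $\sigma\in\aut{S(\alpha)}$. First, $\sigma$ must map the glued copy of $M_{3\times 3}'$ to itself, because it is the unique interval of $S(\alpha)$ with its particular shape; in particular $\sigma(g_0)=g_0$ and, dually, $\sigma(h^0)=h^0$. Second, since $N_{11}\not\cong N_{11}^{(d)}$ and each is itself automorphism-rigid, $\sigma$ permutes the replacement blocks while preserving their type. For each coatom $g^{\iota\mu}$ of the top $T(\alpha)$ (with $\iota<\mu$), the atom $g_\iota$ is attached to $g^{\iota\mu}$ through an $N_{11}$-block (the UL edge) and $g_\mu$ through an $N_{11}^{(d)}$-block (the UR edge). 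Hence the two atoms below $g^{\iota\mu}$ carry intrinsic ``left''/``right'' labels that $\sigma$ must preserve, and it follows that $\sigma$ induces an order-preserving bijection of $\{g_\iota : \iota<6+\alpha\}$ with the ordinal order; the only such bijection is the identity. Dually, $\sigma$ fixes every $h^\iota$ and $h_{\iota\mu}$. Since the remaining elements lie inside a (rigid) replacement block or inside $M_{3\times 3}'$, with their boundaries already fixed pointwise, $\sigma$ is the identity throughout.

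The same left/right labelling on the triangles $\{g_\iota,g_\mu,g^{\iota\mu}\}$ that drove rigidity is a lattice-theoretic invariant of $S(\alpha)$: it produces a total order on its ``top-atoms'' with order type $6+\alpha$. An isomorphism $S(\alpha)\cong S(\beta)$ would transport this order, forcing $6+\alpha \cong 6+\beta$ as ordinals, hence $\alpha=\beta$. I expect the main obstacle to be not any single deep step but the intrinsic recovery of the distinguished pieces of the construction from the bare lattice $S(\alpha)$: identifying the copy of $M_{3\times 3}'$, the element $g_0$, and each individual $N_{11}$- or $N_{11}^{(d)}$-block as uniquely determined intervals (via their characteristic height and local structure). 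Once these recoveries are in place, which is routine but somewhat laborious, both the rigidity and the non-isomorphism claims reduce to combinatorics on ordinals.
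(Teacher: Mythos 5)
Your proposal is correct and follows essentially the same route as the paper: simplicity via the simplicity of the glued pieces plus Corollary~\ref{corLKabsimple}, rigidity and the non-isomorphism claim both via the observation that the $N_{11}$-versus-$N^{(d)}_{11}$ asymmetry of the edges encodes the well-order of type $6+\alpha$ on the atoms $g_\iota$ (and dually on the $h^\iota$), whose rigidity and left-cancellability then finish the argument. The only cosmetic difference is that you first pin down $M_{3\times 3}'$ as a distinguished interval, whereas the paper gets $f\rceil_{M_{3\times 3}'}=\mathrm{id}$ as a consequence of fixing $T(\alpha)\cup T'(\alpha)$ pointwise; since your later steps also fix $g_0$ and $h^0$, this step is not load-bearing and the argument is sound either way.
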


\begin{proof} With $V:=\set{\iota: \iota<6+\alpha}$ and $E:=\set{\set{\iota,\mu}: \iota<\mu<\alpha}$, $\tuple{V;E}$ is a graph. 
Notice that $T(\alpha)$ is the Frucht graph associated with $\tuple{V;E}$; see \init{R.~}Frucht~\cite{fruchtlat} and \init{G.~}Gr\"atzer~\cite[Figure 15.1]{grbypict}. We know from \init{G.~}Gr\"atzer and \init{H.~}Lakser~\cite{grlakser} or  \init{G.~}Gr\"atzer~\cite[Page 188]{grbypict} that $T(\alpha)$ is a simple lattice. (This is why we use $6+\alpha$ rather than $\alpha$ in its definition.) Since $T'(\alpha)$, the dual of $T(\alpha)$, and $M'_{3\times 3}$ are also simple, it follows that $S_0(\alpha)$ is simple.
Finally, since $N_{11}$ and its dual are simple,  Corollary~\ref{corLKabsimple} yields that $S(\alpha)$ is a simple lattice. Since $S_0(\alpha)$ is of length 8, $S(\alpha)$ is of length 12. Also, it is a ranked lattice, that is, any two maximal chains of $S(\alpha)$ have the same number of elements. 
While the graph $\tuple{V;E}$ is encoded in $S_0(\alpha)$, the well-ordered set
$\set{\iota: \iota <6+\alpha}$ is encoded in $S(\alpha)$ as follows: the elements $h^\iota$ and $g_\iota$ can be recognized as the elements of height 4 and the elements dual height 4, respectively. Furthermore, $\iota<\mu$ 
iff the interval $[g_\iota,g_\iota\vee g_\mu]$ is isomorphic to $N_{11}$ 
iff $[g_\mu,g_\iota\vee g_\mu]\cong N^{(d)}_{11}$ 
iff $[h^\iota\wedge h^\mu,h^\iota]\cong N^{(d)}_{11}$ 
iff $[h^\iota\wedge h^\mu,h^\mu]\cong N_{11}$. Hence, if $S(\alpha)\cong S(\beta)$, then 
$\tuple{\set{\iota: \iota <6+\alpha};\leq}\cong\tuple{\set{\iota: \iota <6+\beta};\leq}$, whence $6+\alpha=6+\beta$, implying  that $\alpha=\beta$. This proves the second part of the lemma. 

Clearly, $S(\alpha)$ is a selfdual lattice. Let $f$ be an arbitrary automorphism of $S(\alpha)$.
As we have noticed above, the elements $g_\iota$ are recognized by a first-order property. Hence, 
$f(\set{g_\iota:\iota<6+\alpha}\subseteq \set{g_\iota:\iota<6+\alpha}$. In fact, we have equality here, because the same kind of inclusion holds for $f^{-1}$. However, since the well-ordering of $\set{\iota:\iota<6+\alpha}$ is encoded in the lattice, we obtain that $f$ induces an order automorphism on $\set{\iota:\iota<6+\alpha}$. It is well-known, and it follows by a straightforward transfinite induction, that $\tuple{\set{\iota:\iota<6+\alpha};<}$ is automorphism-rigid. Therefore, $f$ acts as the identity map on $\set{g_\iota:\iota<6+\alpha}$. By duality, the same holds for the set $\set{h^\iota:\iota<6+\alpha}$. Since these two sets generate $T(\alpha)$ and $T'(\alpha)$, respectively, $f$ acts identically on $T(\alpha)\cup T'(\alpha)$. In particular, the black-filled elements are fixed points of $f$, which implies that $f$ acts identically on $M_{3\times 3}'$. Consequently, so does $f$ 
on $S_0(\alpha)$. Finally, since $N_{11}$ and $N^{(d)}_{11}$ are automorphism-rigid, we obtain that $f$ 
is the identity map. Thus,  $S(\alpha)$ is automorphism-rigid.
\end{proof}

\section{A construction and completing the proof}
\label{sectconstcompl}
Besides the general case, the construction is also explained by 

\begin{example}\label{exegy} Assume that we want to represent the ordered set $P=\tuple{P;\le}$ given in Figure~\ref{figcncrTpg} and the dihedral group $G:=D_4$. First, we represent $G$ as the automorphism group of the graph $\tuple{V;E}$ given in the Figure.
\end{example}

\begin{figure}[htc]
\centerline
{\includegraphics[scale=0.9]{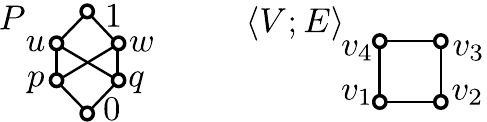}}
\caption{A small example\label{figcncrTpg}}
\end{figure}

We will frequently use the notation
$\minne P:=P\setminus\set{0,1}$.
In general, Lemma~\ref{lemmasabidussi} always allows us to take a graph $\tuple{V;E}$ whose automorphism group is isomorphic to $G$. We shall assume that $V$ is disjoint from $P$. Let $H:=P\cup V$, and consider the quasiordered set $\tuple{H;\nu}$, where  
\begin{equation*}
\nu:=\set{\pair x y\in P^2: x\leq_P y}\cup (H\times (\red{\set 1 \cup}\,\, V))\text.
\label{eqrnudef}
\end{equation*}
This means that each vertex $v\in V$ is added to $P$ as an additional largest element; $\tuple{H;\nu}$ has $1+|V|$ many largest elements. 
We let
\[
I:=J:=E\cup \set{\pair p q\in \minne P\times \minne P: p<q} \cup (\set 1\times V)\text.
\] 
Observe that $I\cup J\cup(\set 0\times H)\cup(H\times\set 1)$ generates $\nu$, that is,  \cite[(4.23)]{czginjlatcat} holds. 
Let $\aourn=\tuple{\ourn,\leq_0;\gamma_0;H,\nu}$ be the same quasi-colored lattice as $\alg L(H,I,J)$  from \cite[(4.21)]{czginjlatcat}, except that we use $M'_{2\times 3}$ rather than $M_{4\times 3}$ in its construction. This modification creates no problem, because the only reason that we used $M_{4\times 3}$ in \cite{czginjlatcat} rather than, say, $M_{2\times 3}$ was to ensure that the length of $L(H,I,J)$ is at least 5 even if $|P|=2$. As opposed to $M'_{2\times 3}$, which is automorphism-rigid, $M_{4\times 3}$ has four automorphisms; this is why the latter is not appropriate here. With $\Theta_\nu$ defined in \eqref{eqrperThnu}, $\tuple{H;\nu}/\Theta_\nu\cong P$. We know from \cite[Lemma 4.6]{czginjlatcat} that $\aourn$ is a quasi-colored lattice and it is selfdual. 
Thus,  Lemma~\ref{lemmaqchaszna} yields that $\princ{\ourn}\cong \tuple{H;\nu}/\Theta_\nu\cong P$.

\begin{figure}[htc]
\centerline
{\includegraphics[scale=0.9]{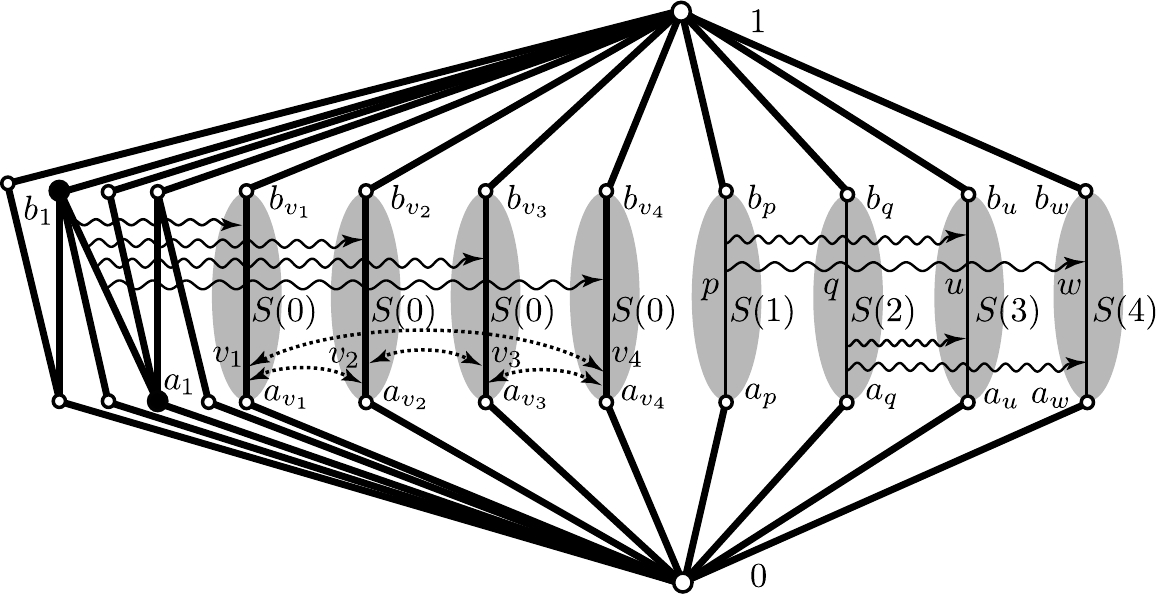}}
\caption{$\ourn$ (without the gray-filled ovals) and $\oure$\label{fig-fgUz}}
\end{figure}

\begin{example}\label{exampleKTt} For the situation described in Example~\ref{exegy} and  Figure~\ref{figcncrTpg},   we visualize $\ourn$ and  $I=J$ in Figure~\ref{fig-fgUz}. We obtain the lattice in this figure by gluing $M'_{2\times 3}$ from Figure~\ref{figmhhvsz} and the chains $\set{0\prec a_x\prec b_x\prec 1}$ for $x\in V\cup \minne P$ at their bottom and top elements.  (Disregard the gray-filled ovals $S(0),\dots, S(0), S(1),\dots S(4)$ in the figure now.) The members of $I=J$ are indicated by arrows: if $\pair x y\in I$, then there is an arrow from the prime interval $[a_x,b_x]$ to the prime interval $[a_y,b_y]$. However, we use two kinds of arrows: dotted arrows for $\pair x y\in E$ and wavy arrows otherwise.  Note that a dotted arc represents two arrows; one from left to right and another one from right to left.
As it is explained in \cite{czginjlatcat}, we obtain $\ourn$ from Figure~\ref{fig-fgUz} so that for every $\pair p q\in I=J$, we replace the corresponding arrow with the double gadget  
$\adbgad(p,q)$ given in \cite[Figure 4]{czginjlatcat}.
\end{example}

For each $p\in \minne P$, pick an ordinal number $\iota_p>0$. We assume that $\iota_p\neq \iota_q$ if $p\neq q$. Also, if $P$ is finite, then let all the $\iota_p$ be finite.
To complete the construction, we replace the prime interval $[a_p,b_p]$ with $S(\iota_p)$ for $p\in \minne P$ and we replace 
$[a_v,b_v]$ with $S(0)$ for $v\in V$. 
The $S(\iota_p)$ and all copies of the $S(0)$ are pairwise disjoint, of course. 
Using a trivial transfinite induction, it follows from Lemma~\ref{lemmalabk} that we obtain a quasi-colored lattice
$\aoure=\tuple{\oure,\leq_1; \gamma_1;H,\nu}$ in this way. 

\begin{example}\label{examplehrm} For $P$ and $G$ from Example~\ref{exegy}, Figure~\ref{fig-fgUz}
gives the lattice $\oure$ with $\princ{\oure}\cong P$ and $\aut{\oure}\cong G$ as follows. The arrows indicate gadgets, as explained in Example~\ref{exampleKTt}.
An edge $\pair x y\in\covpairs L$ (or an interval indicated by an edge) is thick iff it generates the largest congruence iff $\pair 1{\gamma(\pair x y)}\in \nu$.
 The gray-filled ovals $S(0),\dots,S(4)$ stand for the lattices defined before Lemma~\ref{lemmaSalpha}; note that $S(1)$ is derived from $S_0(1)$ given in Figure~\ref{fig-fgdZlT}.
\end{example}

Now, we are in the position to proceed as follows.

\begin{proof}[Proof of Theorem~\ref{thmmain}]
We are going to show that $\princ\oure\cong P$ and $\aut\oure\cong G$.
Since we have already seen that $\princ{\ourn}\cong P$, it follows from Lemmas~\ref{lemmaqchaszna} and \ref{lemmalabk} that 
$\princ{\oure}\cong\princ{\ourn}\cong P$. 
Hence, it suffices to deal with $\aut{\oure}$. 
We say that a subset $X$ of $\oure$ is \emph{rigid}, if the restriction of every member of $\aut{\oure}$ to $X$ is the identity map of $X$. If $f(X)\subseteq X$ for all $f\in\aut{\oure}$, then $X$ an \emph{invariant} subset. For such a subset $X$, $X=f(f^{-1}(X))\subseteq f(X)$. That is, if $X$ is an invariant subset, then $f(X)=X$ holds for all $f\in \aut{\oure}$.

Since $M'_{2\times 3}$ is automorphism-rigid and it is isomorphic to no other cover-preserving $\set{0,1}$-sublattice of $\oure$, it follows that $M'_{2\times 3}$ and, in particular, $\set{a_1,b_1}$ are rigid subsets. The elements $a_x$, $x\in V\cup \minne P$,  are characterized by the properties that $[0,a_x]$ is of length at most 2 and $a_x$ is covered by at least ${6\choose 2}=15$ elements. (This is the second reason why we used $6+\alpha$ rather than $\alpha$ in the definition of $S(\alpha)$, since an element of height 2 with nine covers need not be of the form $a_x$.) 
Therefore, taking duality also into account,
\begin{equation}
\set{a_x: x\in V\cup \minne P}\text{ and }\set{b_x: x\in V\cup \minne P}\text{ are invariant subsets.}
\label{eqrdluiTH}
\end{equation}
For distinct $p,q\in \minne P$ and $v\in V$, observe that $a_p$, $a_q$, and $a_v$ are the bottoms of $S(\iota_p)$, $S(\iota_q)$, and $S(0)$. Since $S(\iota_p)$, $S(\iota_q)$, and $S(0)$ are pairwise non-isomorphic by Lemma~\ref{lemmaSalpha},  no automorphism maps $a_p$ to $a_q$ or $a_v$. Hence, 
\begin{equation}
\set{a_p:p\in \minne P}\text{ is a rigid subset, and so is }
\set{b_p:p\in \minne P} 
\label{eqrJdGsP}
\end{equation}
by duality.
For $x\neq y\in H$, there is at most one gadget (that is, at most one arrow in Figure~\ref{fig-fgUz}) from $[a_x,b_x]$ to  $[a_y,b_y]$. If there is a gadget from  $[a_x,b_x]$ to  $[a_y,b_y]$ and  $f\in\aut\oure$, then the restriction of $f$ to $\set{a_x,b_x,a_y,b_y}$ determines its restriction to the whole the gadget. 
Since $a_x\leq b_y$ iff $x=y$, it follows that if $f(a_x)=a_y$, then $f(b_x)=b_y$. Also, $[a_x,b_x]\cong S(\iota_x)$ for $x\in \minne P$ and $[a_v,b_v]\cong S(0)$ for $v\in V$ are automorphism-rigid by Lemma~\ref{lemmaSalpha}. Putting all the above facts, including \eqref{eqrdluiTH}, and \eqref{eqrJdGsP}, together, we obtain that 
\begin{equation}
\parbox{11 cm}{$\set{a_v:v\in V}$
is an invariant subset and $f\in\aut\oure$ is 
determined by its restriction to this subset.}
\end{equation}

For distinct $x,y\in V\cup\set{1}$ , $f$ clearly preserves the property ``there is a gadget from $[a_x,b_x]$ to $[a_y,b_y]$''. But $[a_1,b_1]=\set{a_1,b_1}\subseteq M'_{2\times 3}$ is a rigid subset, so only $x,y\in V$ is interesting from this point of views. In the spirit of Figure~\ref{fig-fgdZlT}, $f$ preserves the dotted arrows, and also the absence of these arrows. Therefore, $f$ induces an automorphism of the graph $\tuple{V;E}$. Conversely, since the intervals $[a_v,b_v]$ of $\oure$, $v\in V$, are isomorphic and they are only in connection with themselves (and, in the same way, with $[a_1,b_1]$), we conclude that each automorphism of the graph induces a unique automorphism of the sublattice $\bigcup\set{[a_v,b_v]:v\in V}$ and, consequently, of $\oure$. This proves that $\aut\oure\cong{\tuple{V;E}}$, as required.
\end{proof}

\end{document}